\theoremstyle{plain}
\newtheorem{theorem}{Theorem}[section]
\newtheorem{corollary}[theorem]{Corollary}
\newtheorem{lemma}[theorem]{Lemma}
\newtheorem{proposition}[theorem]{Proposition}
\theoremstyle{definition}
\newtheorem{definition}[theorem]{Definition}
\newtheorem{remark}[theorem]{Remark}
\newtheorem*{notation}{Notation}
\newtheorem{example}[theorem]{Example}
\newcommand{\abs}[1]{\lvert#1\rvert}
\newcommand{\norm}[1]{\lVert#1\rVert}
\newcommand{\bignorm}[1]{\bigl\lVert#1\bigr\rVert}
\renewcommand{\le}{\leqslant}
\renewcommand{\ge}{\geqslant}
\renewcommand{\mid}{\::\:}
\newcommand{\term}[1]{{\textit{\textbf{#1}}}}
\def\rk{{\rm rank}\,}
\def\ran{{\rm ran}\,}
\def\hilb{\mathcal H}
\def\bofh{\mathcal B\left(\mathcal H\right)}
\def\range{\mathrm{range}\,}
\def\bbC{\mathbb C}
\def\bbD{\mathbb D}
\def\bbN{\mathbb N}
\def\bbR{\mathbb R}
\def\cB{\mathcal B}
\def\cE{\mathcal E}
\def\cF{\mathcal F}
\def\cH{\mathcal H}
\def\cJ{\mathcal J}
\def\cK{\mathcal K}
\def\cL{\mathcal L}
\def\cN{\mathcal N}
\def\cP{\mathcal P}
\def\cS{\mathcal S}
\def\cT{\mathcal T}
\def\cU{\mathcal U}
\begin{document}
\baselineskip 18pt

\title[Semigroups of Partial Isometries]
{Semigroups of Partial Isometries}

\author[A.I.~Popov]{Alexey I. Popov${}^1$}
\address{Department of Pure Mathematics, University of Waterloo, Waterloo, ON, N2L\,3G1. Canada}
\email{a4popov@uwaterloo.ca}
\author[H.~Radjavi]{Heydar Radjavi${}^1$}
\address{Department of Pure Mathematics, University of Waterloo, Waterloo, ON, N2L\,3G1. Canada}
\email{hradjavi@uwaterloo.ca}
\thanks{${}^1$ Research supported in part by NSERC (Canada)}
\keywords{Partial isometry, semigroup, self-adjoint, spectrum, weighted composition}
\subjclass[2010]{47A65, 47D03, 47B33, 20M20}

\date{\today.}
\begin{abstract}
We study self-adjoint semigroups of partial isometries on a Hilbert space. These semigroups coincide precisely with faithful representations of abstract inverse semigroups. Groups of unitary operators are specialized examples of self-adjoint  semigroups of partial isometries. We obtain a general structure result showing that every self-adjoint semigroup of partial isometries consists of ``generalized weighted composition'' operators on a space of square-integrable Hilbert-space valued functions. If the semigroup is irreducible and contains a compact operator then the underlying measure space is purely atomic, so that the semigroup is represented as ``zero-unitary'' matrices. In this case it is not even required that the semigroup be self-adjoint.
\end{abstract}

\maketitle


\section{Introduction}

It has been known for some time (see~\cite{DP85}; see also \cite[Proposition~2.1.4]{Pat98}) that abstract inverse semigroups are faithfully representable as self-adjoint semigroups of partial isometries on a Hilbert space. We propose to study the spacial structure of these representations.

This paper is mainly concerned with operator semigroups consisting of partial isometries on a separable Hilbert space. A \emph{partial isometry} on a Hilbert space $\hilb$ is an operator $A$ for which $A^*A$ and $AA^*$ are both self-adjoint projections. By a \emph{semigroup} of operators on $\hilb$ we simply mean a set $\cS$ closed under multiplication; it is said to be self-adjoint if $\cS = \cS^*:= \{A^*: A\in\cS\}$. Thus the concept of self-adjoint semigroups of partial isometries is a direct and natural  generalization of that of groups of unitary operators.  It is also a direct generalization of the concept of a self-adjoint \emph{band}, or a semigroup of self-adjoint projections (which is automatically abelian), on~$\hilb$.  Not surprisingly,  our  results will demonstrate how the two concepts of unitary group and band intertwine to yield the desired structure. For the simplest example in finite dimensions, just consider the semigroup $\cS$ consisting of basic $r\times r$ matrices $E_{ij}$  together with the zero matrix. (The basic matrix $E_{ij}$ has only one nonzero entry at $(i, j)$ which is~$1$.) Also,  let $\cU$ be a group of $s\times s$ unitary matrices.  Then  the tensor product  $\cS \otimes \cU$ is a self-adjoint semigroup of partial isometrics. It contains the band $\cB = \{E_{ii}\} \otimes I_s$  which partitions the underlying space into $s$-dimensional subspaces (i.e., the ranges of  projections in~$\cB$) which are isometrically permuted via the unitary group~$\cU$.

Halmos and Wallen~\cite{HW69} gave a thorough description of  the structure of \emph{power partial isometries}, i.e., those $A\in\bofh$ for which $A^n$ is a partial isometry for every natural number~$n$. As will be seen below, this constitutes the special case of  finitely generated  situation in our study of self-adjoint semigroups of partial isometries.
      
Throughout the paper, the symbol $\hilb$ will stand for a complex separable Hilbert space, perhaps finite-dimensional. The term \emph{operator} will refer to a bounded linear map on a Banach space (usually,~$\hilb$). If $X$ and $Y$ are (closed) subspaces of~$\hilb$ then $X\vee Y$ will denote, as usual, the closure of the span of $X\cup Y$. 

A semigroup of operators is called \emph{irreducible} if it has no common (closed, non-trivial) invariant subspaces. It is easy to see that a semigroup of operators is irreducible if and only if the algebra generated by $\cS$ has no common invariant subspaces. 

The operator semigroup generated by a set $\cF$ of operators will be denoted by $\cS(\cF)$. If  $\cF=\{T_1,\dots,T_n\}$ then $\cS(\cF)$ will also be denoted by $\cS(T_1,\dots,T_n)$. In particular, the self-adjoint semigroup generated by $T\in\bofh$ is denoted by $\cS(T,T^*)$. Similarly, if $\cF=\cF_1\cup\cF_2$, we will sometimes use the symbol $\cS(\cF_1,\cF_2)$ to denote the semigroup $\cS(\cF)$. We remark that the semigroup $\cS(\cF)$ consists of all the possible words of the form $A_1A_2\cdots A_k$, where $k\in\bbN$ and for each $i\in\{1,\dots,k\}$, the operator $A_i\in\cF$.

For general texts about the matrix and operator semigroups, we refer the reader to monographs~\cite{Okninski} and \cite{RR00}.


\section{Enriching semigroups with projections}\label{section-2}

In this section we prove that the set of projections in a self-adjoint semigroup of partial isometries can be enlarged to a complete Boolean algebra in such a way that the resulting semigroup still consists of partial isometries. This result, which will be used in Section~\ref{section-3}, seems to be of independent interest.

If an operator $T$ belongs to a semigroup of partial isometries, then it must satisfy the condition that $T$ and all of its powers are partial isometries. Such operators are called \term{power partial isometries}. The following result characterizing power partial isometries was obtained by Halmos and Wallen in~\cite{HW69}. Recall that a pure isometry is just a unilateral shift of arbitrary multiplicity, a pure co-isometry is the adjoint of a pure isometry, and a truncated shift is a Jordan block with zero diagonal.

\begin{theorem}\label{pwr-prt-isometry}\cite{HW69}
Every power partial isometry decomposes as a direct sum whose summands are  unitary operators, pure isometries, pure co-isometries, and truncated shifts.
\end{theorem}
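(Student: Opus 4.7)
The plan is to attach to $T$ the chain of projections coming from its powers, show they pairwise commute, and use the resulting abelian structure to decompose $\hilb$ into reducing subspaces on which $T$ acts as one of the four advertised types. For each $n\ge 0$, set $P_n = T^{*n}T^n$ and $Q_n = T^n T^{*n}$. Since $T^n$ is a partial isometry, both $P_n$ and $Q_n$ are projections: $P_n$ onto $(\ker T^n)^\perp$ and $Q_n$ onto $\ran T^n$. The inclusions $\ker T^n\subseteq \ker T^{n+1}$ and $\ran T^{n+1}\subseteq \ran T^n$ make $(P_n)$ and $(Q_n)$ decreasing, with strong limits $P_\infty$ and $Q_\infty$.

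The critical step is the pairwise commutativity $P_m Q_n = Q_n P_m$. Writing $A = P_m Q_n = T^{*m}T^{m+n}T^{*n}$, the partial-isometry identity $T^{m+n}T^{*(m+n)}T^{m+n} = T^{m+n}$ gives $A^2 = A$; a short computation using $P_m^2=P_m$ and $Q_n^2=Q_n$ then yields $AA^*A = A$, so $A$ is a partial isometry. An idempotent partial isometry is automatically an orthogonal projection, hence $A = A^* = Q_n P_m$. Passing to the strong limit also gives $P_\infty Q_\infty = Q_\infty P_\infty$. This is where I expect the main algebraic work to sit; everything else is bookkeeping.

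With commutativity in hand, $P_\infty$ and $Q_\infty$ each reduce $T$: $P_\infty\hilb = \{x:\|T^nx\|=\|x\|\text{ for all }n\}$ is manifestly $T$-invariant, and its orthogonal complement $\overline{\bigcup_n\ker T^n}$ is $T$-invariant because $T$ carries $\ker T^n$ into $\ker T^{n-1}$, so $P_\infty$ reduces $T$; the same argument applied to $T^*$ (itself a power partial isometry) shows $Q_\infty$ reduces $T$. Decompose $\hilb$ along the four orthogonal reducing projections
\[
P_\infty Q_\infty,\quad P_\infty(I-Q_\infty),\quad (I-P_\infty)Q_\infty,\quad (I-P_\infty)(I-Q_\infty).
\]
On $P_\infty Q_\infty\hilb$, every $T^n$ is isometric and surjective, so $T$ is unitary. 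On $P_\infty(I-Q_\infty)\hilb$, $T$ is isometric while $T^{*n}\to 0$ strongly, so by Wold's decomposition $T$ is a pure isometry. On $(I-P_\infty)Q_\infty\hilb$ the adjoint argument yields a pure co-isometry. On the remaining summand, where $P_n\to 0$ and $Q_n\to 0$, the telescoping layer projections $P_{k-1}-P_k$ together with the action of $T$ linking successive layers display $T$ as an orthogonal direct sum of truncated shifts of various finite lengths.
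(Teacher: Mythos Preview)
The paper does not supply a proof of this theorem; it is simply quoted from Halmos and Wallen~\cite{HW69} and used as a black box throughout. There is therefore nothing in the paper to compare your argument against.

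That said, your proposal is essentially the original Halmos--Wallen proof. They too introduce the commuting families $P_n=T^{*n}T^n$ and $Q_n=T^nT^{*n}$, establish $P_mQ_n=Q_nP_m$ by showing the product is an idempotent partial isometry (your computation $A^2=A$ and $AA^*A=A$ is exactly theirs), pass to the strong limits $P_\infty$, $Q_\infty$, verify these reduce~$T$, and then read off the unitary, pure-isometry, and pure-co-isometry summands from the four pieces $P_\infty Q_\infty$, $P_\infty(I-Q_\infty)$, $(I-P_\infty)Q_\infty$, $(I-P_\infty)(I-Q_\infty)$ just as you do.

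The one place where your sketch is thinner than the source is the final piece $(I-P_\infty)(I-Q_\infty)\hilb$. Saying that the layers $P_{k-1}-P_k$ ``display $T$ as an orthogonal direct sum of truncated shifts'' hides real work: one has to isolate, for each $n\ge 1$, a wandering subspace (Halmos and Wallen use both the $P$- and $Q$-layers here, essentially $(P_{n-1}-P_n)\wedge(I-Q_1)$ or its dual) whose images under $I,T,\dots,T^{n-1}$ are pairwise orthogonal and span a reducing subspace on which $T$ is a truncated shift of index~$n$, and then check that these reducing pieces exhaust $(I-P_\infty)(I-Q_\infty)\hilb$. None of this is hard, but it is where the remaining content lies, and using only the $P_{k-1}-P_k$ will not by itself separate shifts of different lengths. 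With that step filled in, your argument is complete and coincides with~\cite{HW69}.
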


The starting point for our study is the following simple corollary of the result of Halmos and Wallen. It can be viewed as an alternative characterization of power partial isometries in terms of singly generated self-adjoint semigroups.

\begin{proposition}\label{semigroup-ppi}
Let $T$ be an operator in $\bofh$. Then $T$ is a power partial isometry if and only if the self-adjoint semigroup $\cS(T,T^*)$ generated by $T$ consists of partial isometries.
\end{proposition}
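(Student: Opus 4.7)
The forward implication is immediate, since each power $T^n$ belongs to $\cS(T,T^*)$; the substance is the converse, and I would prove it by exploiting Theorem~\ref{pwr-prt-isometry} directly. Writing $T = T_u \oplus T_i \oplus T_c \oplus T_t$ with $T_u$ unitary, $T_i$ a pure isometry, $T_c$ a pure co-isometry, and $T_t$ a direct sum of truncated shifts, every word $W$ in $T$ and $T^*$ splits accordingly as $W_u \oplus W_i \oplus W_c \oplus W_t$, where $W_\alpha$ is the analogous word in $T_\alpha$ and $T_\alpha^*$. A direct sum of partial isometries on pairwise orthogonal summands is again a partial isometry, so it suffices to verify the conclusion in each of the four summands separately.

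The unitary piece is trivial, because any word in a unitary and its adjoint is unitary. For the pure isometry $T_i$, the relation $T_i^* T_i = I$ (and hence $(T_i^*)^c T_i^c = I$ for all $c \ge 0$) lets one iteratively cancel every internal $T_i^* T_i$ pattern; what remains is the normal form $T_i^a (T_i^*)^b$ with $a,b\ge 0$. A direct computation then gives
$(T_i^a (T_i^*)^b)^*(T_i^a (T_i^*)^b) = T_i^b (T_i^*)^b$ and $(T_i^a (T_i^*)^b)(T_i^a (T_i^*)^b)^* = T_i^a (T_i^*)^a$, both of which are the range projections of the isometries $T_i^b$ and $T_i^a$ respectively. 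Hence $W_i$ is a partial isometry; the pure co-isometry case follows by applying the same argument to $T_c^*$, which is an isometry.

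The truncated-shift case is the one where this algebraic cancellation breaks down, since neither $T_t^* T_t$ nor $T_t T_t^*$ equals the identity; this is the main obstacle. I would resolve it by working in the canonical orthonormal bases of the Jordan blocks making up $T_t$: on such a basis a truncated shift $J$ satisfies $J e_k \in \{e_{k+1},0\}$ and $J^* e_k \in \{e_{k-1},0\}$, so both $T_t$ and $T_t^*$ send each basis vector either to another basis vector or to~$0$. This property is manifestly preserved under taking products, so every $W_t$ sends basis vectors to basis vectors or to zero. An operator that maps a subset of an orthonormal basis bijectively to basis vectors and annihilates the rest is visibly a partial isometry, completing the argument.
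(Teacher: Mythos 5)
Your proof is correct and follows essentially the same route as the paper's: both reduce to the Halmos--Wallen decomposition and check that each type of summand generates a self-adjoint semigroup of partial isometries, a verification the paper dismisses as ``easy to check'' and you actually carry out. The only step worth making explicit is in the truncated-shift case: a word containing $p$ letters $J$ and $q$ letters $J^*$ sends $e_k$ to $e_{k+p-q}$ or to $0$, so all surviving basis vectors are shifted by the same amount and hence mapped injectively --- this is what justifies the word ``bijectively'' in your final sentence.
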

\begin{proof}
If $\cS(T,T^*)$ consists of partial isometries, then, obviously, $T$ is a power partial isometry. On the other hand, if $T$ is a power partial isometry, then, by Theorem~\ref{pwr-prt-isometry}, it is a direct sum of a unitary, forward and backward shifts, and finite-dimensional truncations of the shift. It is easy to check that each such summand generates a self-adjoint semigroup consisting of partial isometries. Hence, so does the operator $T$ itself.
\end{proof}

Let us now turn to the study of projections in self-adjoint semigroups of partial isometries. The set of projections in an operator semigroup $\cS$ will be denoted by $\cP(\cS)$.

\begin{proposition}\label{proj-band}
Let $\cS$ be a self-adjoint semigroup of partial isometries. Then every idempotent in $\cS$ is self-adjoint and the set $\cP(\cS)$ of projections in $\cS$ is a commutative subsemigroup of~$\cS$.
\end{proposition}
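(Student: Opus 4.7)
The plan is two-step. First I would show that every idempotent $E \in \cS$ is self-adjoint (hence an orthogonal projection), and then use this to prove that $\cP(\cS)$ is a commutative subsemigroup by showing $PQ$ is idempotent whenever $P, Q \in \cP(\cS)$.

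For the first step, I would note that every partial isometry has norm at most $1$, so $\|E\| \le 1$. A classical Hilbert-space fact then forces $E$ to be an orthogonal projection: $\hilb$ splits as an algebraic direct sum $\range(E) + \ker(E)$, and for any $x \in \range(E)$, $y \in \ker(E)$, $t \in \bbC$, we have $E(x+ty) = x$, hence $\|x\| \le \|x+ty\|$. Letting $t$ range over $\bbR$ and over $i\bbR$ forces $\langle x, y\rangle = 0$, so the decomposition is orthogonal and $E = E^*$. As an alternative route one may invoke Proposition~\ref{semigroup-ppi} to conclude that $E$ is a power partial isometry, and then note that the Halmos-Wallen decomposition in Theorem~\ref{pwr-prt-isometry} admits no non-trivial idempotent summands among pure isometries, pure co-isometries, or truncated shifts, leaving only identity summands on reducing subspaces.

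For the second step, fix $P, Q \in \cP(\cS)$. Since $\cS$ is a semigroup of partial isometries, $PQ \in \cS$ is itself a partial isometry, so
\[
PQ = (PQ)(PQ)^*(PQ) = (PQ)(QP)(PQ).
\]
Expanding the right-hand side and applying $P^2 = P$ and $Q^2 = Q$ telescopes it to $PQPQ = (PQ)^2$. Hence $PQ$ is an idempotent in $\cS$, and the first step gives $PQ = (PQ)^*= QP$. In particular $PQ$ is a projection in $\cS$. Thus $\cP(\cS)$ is closed under multiplication and that multiplication is commutative.

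I do not anticipate any real obstacle. The conceptual content sits entirely in the first step; the key subtlety in the second step is noticing that one does \emph{not} need to know $PQ = QP$ ahead of time in order to apply the partial-isometry identity to $PQ$, since the resulting product $(PQ)(QP)(PQ)$ collapses using only the idempotency of $P$ and of $Q$ individually.
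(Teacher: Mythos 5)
Your proof is correct, and while the first half matches the paper in substance, the second half takes a genuinely different route. For the idempotent claim, the paper writes $E$ as a $2\times 2$ block matrix $\left[\begin{smallmatrix} I & X\\ 0 & 0\end{smallmatrix}\right]$ relative to $\range E\oplus(\range E)^\perp$ and uses contractivity to kill $X$; your inner-product computation with $E(x+ty)=x$ is the same classical fact (``a contractive idempotent is an orthogonal projection'') proved from scratch, so nothing is gained or lost there. The real divergence is in the commutativity step: the paper invokes Lemma~2 of Halmos--Wallen, which says $UV$ is a partial isometry if and only if $U^*U$ and $VV^*$ commute, and applies it with $U=P$, $V=Q$. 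You instead use only the elementary identity $V=VV^*V$ valid for any partial isometry, apply it to $V=PQ$, collapse $(PQ)(QP)(PQ)$ to $(PQ)^2$ via idempotency of $P$ and $Q$ separately, and then feed the resulting idempotent back into the first step to get $PQ=(PQ)^*=QP$. This is self-contained (no external citation needed), and it is a nice economy that commutativity falls out as a corollary of the self-adjointness of idempotents rather than requiring a separate characterization of when products of partial isometries are partial isometries; the paper's version is shorter on the page but only by outsourcing the work. One small point worth making explicit if you write this up: $\range(E)=\ker(I-E)$ is closed, so the algebraic decomposition $\hilb=\range(E)+\ker(E)$ together with the orthogonality you establish really does identify $E$ with the orthogonal projection onto $\range(E)$.
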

\begin{proof}
Let $E=E^2\in\cS$ be an idempotent. With respect to the decomposition $\hilb=\range E\oplus(\range E)^\perp$, write
$$
E=
\begin{bmatrix}
I & X\\
0 & 0
\end{bmatrix}.
$$
Since $E$ is a partial isometry, it must be a contraction, so that $X=0$.

The claim that all projections in $\cS$ commute follows, for instance, from \cite[Lemma~2]{HW69}, which states that the product $UV$ of two partial isometries $U$ and $V$ is a partial isometry if and only if the projections $U^*U$ and $VV^*$ commute. Finally, the product of two commuting projections is necessarily a projection.
\end{proof}

Clearly, every self-adjoint semigroup $\cS$ of partial isometries contains some projections: if $T\in\cS$ is any operator in $\cS$ then $T^*T$ and $TT^*$ are projections in~$\cS$. Our next step is to show that one can always enlarge the semigroup in such a way that it becomes \emph{rich} with projections.

Recall that a semigroup of idempotent operators is called a \term{band} (see \cite[Definition 2.3.4]{RR00}). The fact that the projections in $\cS$ commute allows us to introduce the following object. 

\begin{definition}
Let $\cS=\cS^*$ be a semigroup of partial isometries and $\cP(\cS)$ be the set of projections in~$\cS$. The \term{enveloping band of projections} of $\cS$ is the complete Boolean algebra $\cE(\cS)$ of projections in $\bofh$ generated by $\cP(\cS)$. 
\end{definition}

We note that the enveloping band of projections of $\cS$ is necessarily a commutative band.

\begin{theorem}\label{env-band}
Suppose that $\cS_0$ is a self-adjoint semigroup of partial isometries. Then the semigroup $\cS_1=\cS\big(\cS_0,\cE(\cS_0)\big)$ generated by $\cS_0$ and the enveloping band of projections of $\cS_0$ is a self-adjoint semigroup of partial isometries such that $\cP(\cS_1)=\cE(\cS_1)=\cE(\cS_0)$.
\end{theorem}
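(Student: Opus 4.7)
My plan is organized around two structural observations. Self-adjointness of $\cS_1$ is immediate since $\cE(\cS_0)$ consists of self-adjoint projections and $\cS_0 = \cS_0^*$ by hypothesis. What truly needs to be proved is that every word in letters from $\cS_0 \cup \cE(\cS_0)$ is a partial isometry, and I would do this by reducing every such word to a canonical form $PT$ with $P \in \cE(\cS_0)$ and $T \in \cS_0$ (or to an element of $\cE(\cS_0)$ alone).

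The key lemma for this reduction is that $T\,\cE(\cS_0)\,T^* \subseteq \cE(\cS_0)$ (and symmetrically for $T^*$) for every $T \in \cS_0$. I would verify this on the generators first: for $Q \in \cP(\cS_0)$, the projections $Q$ and $T^*T$ commute by Proposition~\ref{proj-band}, whence the direct computation
$$
(TQT^*)^2 = TQ(T^*T)QT^* = T(T^*T)Q^2T^* = (TT^*T)QT^* = TQT^*
$$
shows $TQT^*$ is an idempotent in $\cS_0$, hence a projection by Proposition~\ref{proj-band}, so $TQT^* \in \cP(\cS_0) \subseteq \cE(\cS_0)$. To upgrade this to arbitrary $P \in \cE(\cS_0)$, I would observe that the map $\phi_T\colon A \mapsto TAT^*$ is WOT-continuous and satisfies $\phi_T(AB) = \phi_T(A)\phi_T(B)$ whenever $A$ commutes with $T^*T$ (by the same commutation trick); restricted to the commutative von Neumann algebra generated by $\cE(\cS_0)$, every element of which commutes with $T^*T \in \cP(\cS_0)$, $\phi_T$ becomes a normal $*$-homomorphism that carries $\cP(\cS_0)$ into itself, hence carries this von Neumann algebra into itself, hence carries projections to projections, so $\phi_T(\cE(\cS_0)) \subseteq \cE(\cS_0)$.

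With the lemma in hand, the exchange relation $TP = (TPT^*)T$ --- which reduces to $TPT^*T = TP(T^*T) = T(T^*T)P = (TT^*T)P = TP$ --- lets me push every projection-letter to the left of every $\cS_0$-letter in any word. Any word in $\cS_0 \cup \cE(\cS_0)$ then collapses either to a product of commuting projections (which remains in $\cE(\cS_0)$) or to $PT$ with $P \in \cE(\cS_0)$ and $T \in \cS_0$. The latter is a partial isometry by the Halmos--Wallen criterion already invoked in the proof of Proposition~\ref{proj-band}, because $P^*P = P$ commutes with $TT^* \in \cP(\cS_0) \subseteq \cE(\cS_0)$. If such an element $PT$ happens to be a projection, then $PT = (PT)^*(PT) = T^*P^*PT = T^*PT$, which lies in $\cE(\cS_0)$ by the lemma; combined with the obvious containment $\cE(\cS_0) \subseteq \cP(\cS_1)$, this yields $\cP(\cS_1) = \cE(\cS_0)$, and the equality $\cE(\cS_1) = \cE(\cS_0)$ follows because $\cE(\cS_0)$ is already a complete Boolean algebra of projections.

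The main obstacle I expect is the extension from the generator-level identity $T\cP(\cS_0)T^* \subseteq \cP(\cS_0)$ to the full Boolean algebra $\cE(\cS_0)$: the generator case is a direct calculation, but the passage to the complete algebra relies on recognising $\phi_T$ as a normal $*$-homomorphism on a commutative von Neumann algebra, which uses in an essential way that $T^*T \in \cP(\cS_0)$ and therefore commutes with every element of $\cE(\cS_0)$.
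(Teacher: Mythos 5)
Your proposal is correct and follows essentially the same route as the paper: your exchange relation $TP=(TPT^*)T$ is exactly the paper's Claims 1--2 (there stated as $EA=AF$ with $F=A^*EA$), and your normal-form reduction of every word to $PT$ with $P\in\cE(\cS_0)$, $T\in\cS_0$ accomplishes the same thing as the paper's induction on word length in its Claim 3. The only differences are cosmetic: the paper extends the exchange relation from $\cP(\cS_0)$ to $\cE(\cS_0)$ by induction on Boolean words followed by SOT-limits of increasing nets, whereas you invoke normality of $A\mapsto TAT^*$ on the commutative von Neumann algebra, which tacitly uses that the projections of that algebra are exactly $\cE(\cS_0)$ --- a standard fact the paper itself also relies on in the proof of Theorem~\ref{integral-representation}.
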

\begin{proof}
For the sake of clarity, let us split the proof into several simple claims.

{\it Claim 1}. If $A\in\cS_0$ and $E\in\cP(\cS_0)$ then there exists $F\in\cP(\cS_0)$ such that $EA=AF$. 

To prove the claim, put $F=A^*EA$. Clearly, $F\in\cS_0$. Also,
$$
F=A^*EA=(A^*E)(EA)=(EA)^*(EA)
$$
is a projection, since $EA\in\cS$ is a partial isometry. Finally, letting $P=AA^*$ and using $PE=EP$, we obtain
$$
AF=AA^*EA=PEA=EPA=EA.
$$
This proves {\it Claim 1}.

{\it Claim 2}. Let $A\in\cS_0$ and $E\in\cE(\cS_0)$. Then there exists $F\in\cE(\cS_0)$ such that $EA=AF$. 

Let us denote the Boolean algebra generated by $\cP(\cS_0)$ by~$\cB$. Then $\cE(\cS_0)$ consists of all the suprema and infima of subsets of projections in~$\cB$. We will first establish that the claim holds for all projections in~$\cB$.

Let $E\in\cB$ be arbitrary. Then $E$ is obtained as a finite word consisting of projections in $\cP(\cS_0)$ and links 
$$
a\wedge b = ab,\quad \neg a = I-a,
$$
where $I$ is the identity map in $\bofh$. The proof is by induction on the length of the word corresponding to $E\in\cB$.

If the length is equal to one then $E\in\cP(\cS_0)$, so that the desired statement follows from {\it Claim~1}.

Suppose that the length is greater than one. Then either $E=E_1\wedge E_2$ or $E=\neg E_1$, for some $E_1$ and $E_2$ in $\cB$ which have length smaller than that of~$E$. By the induction hypothesis, $E_1A=AF_1$ and $E_2A=AF_2$ for some $F_1$ and $F_2\in\cB$. So, in the first case we have: 
$$
EA=(E_1\wedge E_2)A=E_1E_2A=E_1AF_2=AF_1F_2=A(F_1\wedge F_2),
$$
and in the second case
$$
EA=(\neg E_1)A=(I-E_1)A=A(I-F_1)=A(\neg F_1).
$$
This completes the induction. We remark that the projection $F$ obtained by this construction satisfies the equality $F=A^*EA$.

Now, let $E\in\cE(\cS_0)$ be arbitrary. Clearly, it is enough to consider the case when $E$ is the supremum of a collection of projections in~$\cB$. It is easy to see that there exists an increasing net $(E_\alpha)$ of projections in $\cB$ such that $E_\alpha\overset{SOT}{\longrightarrow}E$ (for example, the net $(E_\alpha)$ could be indexed by the finite subcollections of a collection of projections whose supremum is~$E$). By the first part of {\it Claim~2}, fix for each $E_\alpha$ the corresponding projection~$F_\alpha$. Notice that the choice of the projection $F$ in {\it Claim~1} and in the first part of {\it Claim~2}, $F_\alpha=A^*E_\alpha A$, respects the order in $\bofh$: if $E_1\le E_2$ are two projections in $\cP(\cS_0)$ then the correspondent projections $F_1$ and $F_2$ satisfy $F_1\le F_2$. It follows that the net of projections $(F_\alpha)$ is increasing. By \cite[Lemma~I.6.4]{Davidson}, there exists a projection $F\in\cE(\cS_0)$ such that $F_\alpha\overset{SOT}{\longrightarrow}F$. Then
$$
EA=\mbox{{\tiny SOT-}\!}\lim_\alpha(E_\alpha A)=\mbox{{\tiny SOT-}\!}\lim_\alpha(AF_\alpha)=AF,
$$
and {\it Claim~2} is proven.

{\it Claim 3}. The semigroup $\cS_1=\cS(\cS_0,\cE(\cS_0))$ consists of partial isometries. Moreover, $\cP(\cS_1)=\cE(\cS_1)=\cE(\cS_0)$.

For notational convenience, let us assume $I\in\cS_0$. The semigroup $\cS_1$ consists of operators of the form 
$$
T=A_1E_1A_2E_2\cdots A_{n-1}E_{n-1}A_n,
$$
with $A_i\in\cS_0$ and $E_k\in\cE(\cS_0)$. We proceed by induction on~$n$. If $n=1$, then $TT^*=A_1A_1^*\in\cP(\cS_0)$ and $T^*T=A_1^*A_1\in\cP(\cS_0)$, so that $T$ is a partial isometry. Suppose that $n\ge 2$. Let $T_0=A_1E_1A_2E_2\cdots A_{n-1}$. Then
$$
TT^*=(T_0E_{n-1}A_n)(T_0E_{n-1}A_n)^*=T_0E_{n-1}A_nA_n^*E_{n-1}T_0^*.
$$
Let $P=A_nA_n^*$. Then $P\in\cP(\cS_0)\subseteq\cE(\cS_0)$, therefore $PE_{n-1}=E_{n-1}P$. We obtain:
$$
TT^*=T_0E_{n-1}PT_0^*.
$$
Since $T_0$ is a product of operators in $\cS_0$ and $\cE(\cS_0)$, it follows from {\it Claim~2} and the fact that $\cE(\cS_0)$ is commutative that there exist projections $Q$ and $F_{n-1}$ in $\cE(\cS_0)$ such that
$$
PT_0^*=T_0^*Q\quad\mbox{and}\quad E_{n-1}T_0^*=T_0^*F_{n-1}.
$$
Hence
$$
TT^*=T_0T_0^*F_{n-1}Q.
$$
By the induction hypothesis, $T_0T_0^*$ is a projection in $\cE(\cS_0)$. Hence, so is~$TT^*$. The argument for $T^*T$ is similar. 

Since the induction argument above shows that $TT^*\in\cE(\cS_0)$ for all $T\in\cS_1$, we also get the equality $\cP(\cS_1)=\cE(\cS_1)=\cE(\cS_0)$.
\end{proof} 

\section{The structure}\label{section-3}

In this section we obtain the spacial structure of self-adjoint semigroups of partial isometries. The following theorem is stated for irreducible semigroups. We will see that the irreducibility condition is not an essential restriction. We will discuss the reducible semigroups right after the theorem (Remark~\ref{reducible}).

\begin{theorem}\label{integral-representation}
Let $\cS=\cS^*$ be an irreducible semigroup of partial isometries. Then $\hilb=L^2(\Omega,\cK)$, for some probability measure space $\Omega$ and a Hilbert space $\cK$, such that for each $T\in\cS$, the following are true:
\begin{enumerate}
\item the initial space of $T$ is $L^2(X_T,\cK)$ and the final space of $T$ is $L^2(Y_T,\cK)$, for some measurable subsets $X_T$ and $Y_T$ of~$\Omega$;



\item 
$
T=U_{T}\cdot\int_{X_T}^\oplus T_t\,dt,
$
where $T_t\in\cB(\cK)$ is a unitary operator for each $t\in X_T$ and $U_{T}:L^2(X_T,\cK)\to L^2(Y_T,\cK)$ is a surjective isometry defined by  
$$
(U_{T}f)(t)=w_T(t)f(\varphi_T^{-1}(t)),
$$
for some measurability preserving bijection $\varphi_T:X_T\to Y_T$ (modulo subsets of measure zero) and a weight function $w_T:Y_T\to\bbR^+$.
\end{enumerate}

Moreover, the projections in the enveloping band of projections of $\cS$ are exactly the operators of form $P_A(f)(t)=\chi_A(t)f(t)$ ($t\in\Omega$), where $A\subseteq\Omega$ is measurable.
\end{theorem}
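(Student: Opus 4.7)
The plan is to leverage the enveloping band of projections constructed in Section~\ref{section-2} and apply the spectral theory of commutative von Neumann algebras. First, I would pass from $\cS$ to the enlarged semigroup $\cS_1=\cS(\cS,\cE(\cS))$. By Theorem~\ref{env-band}, $\cS_1$ is still a self-adjoint semigroup of partial isometries with $\cP(\cS_1)=\cE(\cS)$, and since $\cS\subseteq\cS_1$, the irreducibility of $\cS$ forces irreducibility of $\cS_1$. Working with $\cS_1$ is convenient because it actually contains all of $\cE(\cS)$.

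Next I would apply the structure theorem for commutative von Neumann algebras to $\cE(\cS)''$: up to spatial isomorphism $\hilb$ becomes a direct integral $\int^\oplus \cK_t\,d\mu(t)$ over a standard measure space $(\Omega,\mu)$, with $\cE(\cS)''$ acting by pointwise multiplication by $L^\infty(\Omega,\mu)$. By renormalizing $\mu$ to a probability measure and passing to a maximal abelian extension if necessary, the projections in $\cE(\cS)$ correspond precisely to the operators $P_A:f\mapsto\chi_A f$ for measurable $A\subseteq\Omega$; this already proves the ``moreover'' statement.

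Then, for each $T\in\cS$, I would unpack the intertwining $EA=AF$ (with $F=A^*EA$) from \emph{Claim~2} of Theorem~\ref{env-band}: the assignment $E\mapsto T^*ET$ is a Boolean-algebra homomorphism on $\cE(\cS)$ that sends $TT^*$ to $T^*T$. Writing $T^*T=\chi_{X_T}$ and $TT^*=\chi_{Y_T}$, Stone-type representation in the standard Borel setting yields a measurability-preserving bijection $\varphi_T:X_T\to Y_T$ modulo null sets, proving (i). The intertwining relation then forces $T$ to be a weighted composition operator on the $L^2$-level: the weight $w_T$ is the square root of the Radon--Nikodym derivative $d(\varphi_T)_*\mu/d\mu$ (needed to make the map an isometry), and the fiberwise ambiguity is absorbed into a measurable field of unitaries $T_t:\cK_{\varphi_T^{-1}(t)}\to\cK_t$, obtained by a standard measurable-selection argument using separability of $\hilb$.

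The main obstacle is the fiber-constancy assertion, namely that the fibers $\cK_t$ can all be taken equal to a single Hilbert space $\cK$. Here irreducibility enters decisively: the sets $\Omega_n=\{t\in\Omega: \dim\cK_t=n\}$ are invariant under every $\varphi_T$, since each $T$ implements a unitary equivalence between $\cK_{\varphi_T^{-1}(t)}$ and $\cK_t$. Consequently the multiplication operator $P_{\Omega_n}=\chi_{\Omega_n}$ commutes with every $T\in\cS_1$, and any non-trivial $\Omega_n$ would give a proper reducing subspace of $\cS_1$, contradicting irreducibility. Hence exactly one $\Omega_n$ has full measure, so $\cK_t\equiv\cK$ almost everywhere and the direct integral collapses to $L^2(\Omega,\cK)$, finishing the proof.
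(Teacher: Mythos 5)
Your outline follows essentially the same route as the paper: pass to the enveloping band (Theorem~\ref{env-band}), diagonalize the commutative von Neumann algebra generated by $\cE(\cS)$ as a direct integral, realize the induced map on projections as a point map $\varphi_T$, take $w_T$ to be the square root of a Radon--Nikodym derivative, and peel off a decomposable operator with unitary fibers. Part (i) and the ``moreover'' clause come out correctly, though the phrase ``passing to a maximal abelian extension if necessary'' is off the mark: the reason every multiplication $P_A$ already lies in $\cE(\cS)$ is that $\cE(\cS)$ is a \emph{complete} Boolean algebra and hence coincides with the projection lattice of the von Neumann algebra it generates; extending to a masa would change the fibers.

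The genuine gap is in the fiber-constancy step, which you rightly call the main obstacle but then dispatch with the unproved assertion that ``each $T$ implements a unitary equivalence between $\cK_{\varphi_T^{-1}(t)}$ and $\cK_t$.'' That assertion is precisely the nontrivial point, and it is not even well posed before the fibers have been identified. What is actually available at that stage is only that $T$ restricts to a surjective isometry from $\int_A^\oplus\cK_t\,dt$ onto $\int_{\varphi_T(A)}^\oplus\cK_t\,dt$ for every measurable $A\subseteq X_T$; one must rule out that such an isometry can exist when $\dim\cK_t\equiv n$ on the domain and $\dim\cK_t\equiv m<n$ on the range. Nothing formal prevents this --- the two $L^2$ spaces are globally isometric as Hilbert spaces --- so the locality of the isometry has to be exploited. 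The paper does this with a quantitative argument: take $m+1$ orthonormal vectors $e_1,\dots,e_{m+1}$ in the $n$-dimensional fiber, form the constant sections $f_k$, approximate their images within $\varepsilon$ by constant vectors $v_k$ on a set $Z$ of positive measure, use $\dim=m$ to extract a normalized linear dependence among the $v_k$, and contradict the fact that the restriction to $L^2(Z,\cdot)$ is an isometry. Alternatively one could appeal to the uniqueness of the multiplicity function in direct-integral theory, but some such argument or citation is required; without it the claimed $\varphi_T$-invariance of the sets $\Omega_n$ --- and hence the entire irreducibility argument that follows --- is unsupported.
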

\begin{proof}
By Theorem~\ref{env-band}, we may assume that the projections in $\cS$ all commute and form a complete Boolean algebra. Let $\cN$ be the von Neumann algebra generated by $\cP(\cS)$. Clearly, $\cN$ is commutative. By \cite[Theorem~2, p.~236]{Dixmier}, there exists a probability space $(\Omega,\mu)$ such that $\hilb=\int_\Omega^\oplus\hilb_t\,dt$ and $\cN$ consists exactly of ``diagonalizable'' operators with respect to this decomposition. That is, $A\in\cN$ if and only if it is of the form $A=\int_\Omega^\oplus A_t\,dt$, where $A_t\in\cB(\hilb_t)$ is a scalar multiple of identity. We remark that, since $\cP(\cS)$ is a complete Boolean algebra, the set of projections in $\cN$ is equal to~$\cP(\cS)$.

For an arbitrary $T\in\cS$, let $E_T=T^*T$ and $F_T=TT^*$. Clearly, both $E_T$ and $F_T$ belong to~$\cN$. It follows that both $E_T$ and $F_T$ are ``diagonalizable''. Since these two operators are projections, it is easy to see that there exist measurable sets $X_T$ and $Y_T$ such that
$$
E_T=\int_{X_T}^\oplus I_t\,dt
\quad
\mbox{and}
\quad
F_T=\int_{Y_T}^\oplus I_t\,dt,
$$
where $I_t$ stands for the identity operator in $\hilb_t$.

We will construct a measurability preserving bijection $\varphi_T:X_T\to Y_T$ as follows. For an arbitrary measurable set $A\subseteq\Omega$, put
$$
P_A=\int_A^\oplus I_t\,dt.
$$
Let $A\subseteq X_T$ be measurable. The projection $P_A$ belongs to the algebra~$\cN$, and, hence, to the semigroup~$\cS$. Thus, the product $TP_A$ belongs to~$\cS$, too. This product is a partial isometry, with initial space inside $E_T$ and final space inside~$F_T$. It follows that there exists a measurable set $B\subseteq Y_T$, uniquely defined modulo a set of measure zero, such that
$$
(TP_A)(TP_A)^*=\int_{B}^\oplus I_t\,dt.
$$
Define $\phi$ as a set function by $\phi(A)=B$. By repeating the procedure with $T^*$, we can see that $\phi$ defines a bijection between measurable subsets of $X_T$ and measurable subsets of $Y_T$. Moreover, if $A_1\subseteq A_2$, then $\phi(A_1)\subseteq\phi(A_2)$, and $\phi(X_T)=Y_T$. By \cite[Corollary 9.5.2]{Bogachev}, there exists a bijective point map $\varphi_T$ between $X_T$ and $Y_T$, modulo subsets of measure zero, such that $\varphi_T$ is measurability preserving.

Now, let us define a measure $\nu_T$ on $Y_T$ by the formula
$$
\nu_T(B)=\mu(\varphi_T^{-1}(B)),
$$
for all measurable sets $B\subseteq Y_T$. We claim that $\nu_T$ is equaivalent to~$\mu$. Indeed, if $\nu_T(B)\ne 0$ then, by definition, $\mu(\varphi_T^{-1}(B))\ne 0$, so that the space
$$
\cL=\int_{\varphi^{-1}_T(B)}^\oplus\hilb_t\,dt
$$
is non-zero. Clearly, $\cL$ is a subspace of the initial space of~$T$. Therefore, $T|_\cL$ is an isometry. But then
$$
T(\cL)=\int_{B}^\oplus\hilb_t\,dt
$$
is non-zero, implying $\mu(B)\ne 0$. So, $\nu_T\ll\mu$ on~$Y_T$. Repeating the argument with $T^*$, we get: $\nu_T\approx\mu$ on~$Y_T$.

We are now ready to prove the claims of the theorem. First, let us show that each $\hilb_t$ has the same dimension, modulo a set of measure zero. Suppose that this is not true. Let the set $A_n$, where $n\in\bbN\cup\{\infty\}$, be defined by $A_n=\{t\in\Omega\mid\dim\hilb_t=n\}$. Clearly, there must exist two numbers $n> m$ in $\bbN\cup\{\infty\}$ such that $\mu(A_n)\ne 0$ and $\mu(A_m)\ne 0$. Since $\cS$ is irreducible, there exists an operator $T\in\cS$ whose initial space is contained in $P_{A_n}$ and whose final space overlaps non-trivially with $P_{A_m}$ (otherwise, the set $\cS P_{A_n}\hilb$ is perpendicular to the space $P_{A_m}\hilb$, contradicting irreducibility). Since $P_{A_n}$ and $P_{A_m}$ belong to $\cS$, we may assume, replacing $T$ with $P_{A_m}T$ and shrinking $A_n$ and $A_m$, if necessary, that $T^*T=P_{A_n}$ and $TT^*=P_{A_m}$.

Let $\hilb_1$ be the Hilbert space of dimension~$n$, and $\hilb_2$ be the Hilbert space of dimension~$m$. Then the restriction of $T$ to $P_{A_n}\hilb$ is an isometry between $L^2(A_n,\hilb_1)$ and $L^2(A_m,\hilb_2)$. Note that $(A_m,\nu_T)$ is isomorphic to $(A_n,\mu)$ via $\varphi_T$. Since also $\nu_T$ is equivalent to $\mu$ on $A_m$, we conclude that $L^2(A_n,\hilb_1)$ is isometrically isomorphic to $L^2(A_n,\hilb_2)$, via a map which we will denote by~$\widetilde T$. Moreover, if $X\subseteq A_n$ is a measurable subset then the restriction of $\widetilde T$ to $L^2(X,\hilb_1)$ is an isometry onto $L^2(X,\hilb_2)$.

Recall that $n>m$ by the choice of $n$ and~$m$. Let $e_1,\dots,e_{m+1}$ be an orthonormal set of vectors in $\hilb_1$. Define $f_1,\dots,f_{m+1}$ in $L^2(A_n,\hilb_1)$ by
$$
f_k(t)=e_k,\quad k=1,\dots, m+1,\quad t\in A_n.
$$
Let $\tilde f_k=\widetilde Tf_k$, $k=1,\dots,m+1$. Given $\varepsilon>0$, there exist $v_1,\dots,v_{m+1}\in\hilb_2$ and a measurable set $Z\subseteq A_n$ with $\mu(Z)>0$ such that
$$
\norm{v_k-\tilde f_k(t)}\le\varepsilon
$$
for all $t\in Z$ and $k\in\{1,\dots,m+1\}$. Since $\dim\hilb_2=m$, the vectors $\{v_1,\dots,v_{m+1}\}$ are linearly dependent. That is, for some scalars $a_1,\dots,a_{m+1}$, not all zero, we have
$$
\sum_{k=1}^{m+1}a_kv_k=0.
$$
Permuting indices, if necessary, we may assume that
$$
\abs{a_1}=\max_{k=1,\dots,m+1}\abs{a_k}.
$$
Dividing by $a_1$, we get
$$
v_1+\sum_{k=2}^{m+1}b_kv_k=0,
$$
with $\abs{b_k}\le 1$. It follows that 
$$
\bignorm{\tilde f_1(t)+\sum_{k=2}^{m+1}b_k\tilde f_k(t)}\le(m+1)\varepsilon
$$
for all $t\in Z$. Since 
$$
\bignorm{f_1(t)+\sum_{k=2}^{m+1}b_k f_k(t)}\ge 1
$$
for all $t\in Z$ by orthogonality, this contradicts the fact that $\widetilde T|_{L^2(Z,\hilb_1)}$ is a surjective isometry.
%

We have proved that all Hilbert spaces $\hilb_t$ ($t\in\Omega$) have the same dimension. That is,
$$
\hilb=L^2(\Omega,\cK)
$$
for some Hilbert space~$\cK$. Let us now show that, with respect to this structure of~$\hilb$, every operator $T\in\cS$ decomposes as claimed in the statement of the theorem.

Fix arbitrary $T\in\cS$ and let $X_T$, $Y_T$, $\varphi_T$, and $\nu_T$ be defined as in the beginning of the proof. Since $\nu_T\approx\mu$ on $Y_T$, by Radon-Nikodym theorem there exists a measurable function $h_T:Y_T\to\bbR^+$ such that
$$
\mu(B)=\int_B h_T(t)\,d\nu_T(t)
$$
for all measurable $B\subseteq Y_T$. Define $U_{T}:L^2(X_T,\cK)\to L^2(Y_T,\cK)$ by
$$
(U_{T}f)(t)=h^{1/2}(t)f(\varphi^{-1}_T(t)),
$$
for all $t\in Y_t$ and $f\in L^2(X_T,\cK)$. It can be easily checked that $U_{T}$ is a surjective isometry. Also, it is clear that $U_{T}P_{A}\hilb\subseteq P_{\varphi_T(A)}\hilb$ and  $U^{-1}_{T}P_{\varphi_T(A)}\hilb\subseteq P_{A}\hilb$ for every measurable $A\subseteq X_T$.

Consider the operator 
$$
\widehat T=U^{-1}_{T}T=U^{*}_{T}T
$$
(this is well-defined on $L^2(X_T,\cK)$; we let $\widehat T$ be equal to zero on $L^2(\Omega\setminus X_T,\cK)$). Since $T$ maps $P_A\hilb$ to $P_{\varphi_T(A)}\hilb$ and $T^*$ maps $P_{\varphi_T(A)}\hilb$ to $P_A\hilb$ for each measurable $A\subseteq X_T$, the spaces $P_A\hilb=L^2(A,\cK)$ are all $\widehat T$-reducing. It follows that $\widehat T$ commutes with all operators in~$\cN$. By \cite[Theorem~1, p.~187]{Dixmier},
$$
\widehat T=\int_{\Omega}^\oplus T_t\,dt,
$$
where each $T_t\in\cB(\cK)$. Then 
$$
T=U_{T}\cdot\int_{\Omega}^\oplus T_t\,dt.
$$
Note that 
$$
T^*T=\widehat T^*U_{T}^*U_{T}\widehat T=\int_{X_T}^\oplus T^*_tT_t\,dt
$$
is a projection with the range $L^2(X_T,\cK)$. It follows that $T^*_tT_t=I_\cK$ for each $t\in X_T$, so that 
each $T_t$ is a unitary operator on~$\cK$.
\end{proof}


\begin{remark}\label{masa-producing}
If the set of projections in~$\cS$ is masa-generating (that is, the von Neumann algebra generated by this set forms a masa) then our proof shows that the space $\cK$ in the conclusion of Theorem~\ref{integral-representation} is one-dimensional. It follows that in this case, $\cS$ consists of just weighted composition operators.
\end{remark}

%

\begin{remark}\label{reducible}
The only place where irreducibility of the semigroup $\cS$ was essentially used in the proof of Theorem~\ref{integral-representation} is the part showing that all the $\hilb_t$ have the same dimension. For semigroups that are not irreducible, our argument still produces a decomposition of the Hilbert space into a direct integral
$$
\hilb=\int_\Omega^\oplus\hilb_t\,dt,
$$
such that the projections in $\cS$ correspond to measurable subsets of~$\Omega$. Rearranging $\Omega$ and denoting the Hilbert space of dimension $n$ by $\hilb_n$, where $n\in\bbN\cup\{\infty\}$, we get:
$$
\hilb=\bigoplus_{n\in\bbN\cup\{\infty\}}L^2(\Omega_n,\hilb_n),
$$
where some of the $\Omega_n$ may have zero measure. Repeating the argument from the proof of Theorem~\ref{integral-representation} showing that in the irreducible case the dimensions of the correspondent Hilbert spaces are the same, we conclude that each direct summand in the decomposition above is $\cS$-reducing. Moreover, the operators in $\cS$ decompose as
$$
\hilb=\bigoplus_{n\in\bbN\cup\{\infty\}}T_n,
$$
where each $T_n\in L^2(\Omega_n,\hilb_n)$ has the structure as described in the statement of Theorem~\ref{integral-representation}.
\end{remark}

\begin{remark}
The weight functions $w_T(t)$ in the definition of the isometric mapping $U_{T}$ are present to compensate for the fact that the measure space is not ``symmetric'' with respect to the action of the partial isometries in the semigroup: if $T^*T=P_X$ and $TT^*=P_Y$, for measurable sets $X$ and $Y$, then it may happen that $\mu(X)\ne\mu(Y)$. One might hope to define an equivalent measure on the measurable space $(\Omega,\Sigma)$ to correct this. The following example shows that this is not always possible.
\end{remark}

\begin{example}\label{ugly-measure}
Let $\hilb$ be written as a direct sum $\hilb=\oplus_{k\in\bbN}\hilb_k$ of infinite dimensional Hilbert spaces. Define a semigroup $\cS_1$ as the semigroup containing the identity operator and the set of all operators $T\in\bofh$ which are represented as infinite block matrices with respect to this decomposition, with at most one non-zero block, and the non-zero block equal to the identity operator. For example,
$$
T=\begin{bmatrix}
0 & 0 & 0 & \dots \\
I & 0 & 0 & \\
0 & 0 & 0 & \\
& \dots & & 
\end{bmatrix},
$$
belongs to~$\cS_1$.

Analogously, define $\cS_2$ as the semigroup containing the identity operator and the set of all operators 
$$
\begin{bmatrix}
T_{11} & T_{12} & \dots\\
T_{21} & T_{22} & \dots\\
 & \dots
\end{bmatrix},
$$
where at most one block is non-zero, and the non-zero block belongs to~$\cS_1$. We emphasize that in order to put members of $\cS_1$ as blocks of members of~$\cS_2$, we subdivide each summand $\hilb_k$ from the definition of $\cS_1$ into a further direct sum of countably many infinite-dimensional Hilbert spaces. This way, we obtain $\cS_1\subseteq\cS_2$. By induction, we define $\cS_{n}$ to be the operators which are represented as matrices $(T_{ij})$ with at most one non-zero block, and the non-zero block belongs to $\cS_{n-1}$. Again, it is clear that $\cS_{n-1}\subseteq\cS_n$.

Let $\cS_\infty=\cup_{n\in\bbN}\cS_n$. Obviously, $\cS_\infty$ is a self-adjoint semigroup of partial isometries. Finally, let $\cT$ be the semigroup generated by $\cS_\infty$ and its enveloping band of projections, as in Theorem~\ref{env-band}. It is not difficult to show that the obtained semigroup is irreducible.

%
%

Assume that, with respect to some measure space $(\Omega,\mu)$, the operators $U_{T}$ corresponding to operators $T\in\cT$ have the weight functions $w_T$ satisfying $w_T(t)=1$ for all $t$ in the domain of~$\varphi_T$. This implies that, given an operator $T\in\cT$, with the initial space $L^2(X_T,\cK)$ and the final space $L^2(Y_T,\cK)$, we have $\mu(X_T)=\mu(Y_T)$. However, then the structure of $\cT$ implies that every  such $X_T$ must contain an infinite sequence of disjoint subsets of equal measure, forcing $\mu(X_T)=\infty$. Thus, every measurable subset of $\Omega$ has infinite measure.
\end{example}

Note that the measure in Example~\ref{ugly-measure} is atomless. Let us briefly discuss some properties of semigroups with the property that the measure obtained in Theorem~\ref{integral-representation} is atomic.

\begin{proposition}\label{atoms}
Let $\cS=\cS^*$ be an irreducible semigroup of partial isometries. Let $\hilb=L^2(\Omega,\cK)$ be as obtained in Theorem~\ref{integral-representation}. If $(\Omega,\mu)$ has an atom then $(\Omega,\mu)$ is purely atomic.
\end{proposition}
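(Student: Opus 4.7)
The plan is to fix an atom $A\subseteq\Omega$, form the subspace $\cM_A := L^2(A,\cK) = P_A\hilb$, and use irreducibility of $\cS$ to show that the $\cS$-orbit of $\cM_A$ spans $\hilb$ while consisting of spaces $L^2(B,\cK)$ for only countably many atoms $B$; this forces $\Omega$ to equal (mod null) a countable union of atoms.

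The key lemma to establish is: for every $T\in\cS$, either $TP_A = 0$, or $A\subseteq X_T$ (mod null) and $T$ restricts to a surjective isometry $\cM_A \to L^2(\varphi_T(A),\cK)$, where $\varphi_T(A)$ is itself an atom. The dichotomy is immediate because $A\cap X_T$ is a measurable subset of the atom $A$, so has measure $0$ or $\mu(A)$. When $A\subseteq X_T$, the explicit form $T = U_T\cdot\int_{X_T}^\oplus T_t\,dt$ with unitary fibres $T_t$ shows $T(\cM_A) = L^2(\varphi_T(A),\cK)$. To verify that $\varphi_T(A)$ is itself an atom: any measurable $B\subseteq\varphi_T(A)$ has preimage $\varphi_T^{-1}(B)\subseteq A$, so $\mu(\varphi_T^{-1}(B))\in\{0,\mu(A)\}$; transferring through $\nu_T(B) = \mu(\varphi_T^{-1}(B))$ and the equivalence $\nu_T\approx\mu$ yields $\mu(B)\in\{0,\mu(\varphi_T(A))\}$.

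Next I set $\cM := \overline{\mathrm{span}}\{T\cM_A : T\in\cS\}$. By the lemma, $\cM$ is the closed linear span of spaces $L^2(B,\cK)$ as $B$ ranges over the family of atoms of the form $\varphi_T(A)$. Distinct atoms are disjoint (mod null) with positive measure, and $\mu$ is a probability measure, so only countably many distinct $B$ can appear; enumerate them $A_1,A_2,\dots$ to obtain
$$
\cM = L^2\Bigl(\bigcup_n A_n,\cK\Bigr).
$$
Invoking Theorem~\ref{env-band}, we may assume $P_A\in\cS$, so $\cM_A\subseteq\cM$; moreover $\cM$ is $\cS$-invariant by construction. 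Irreducibility of $\cS$ then forces $\cM = \hilb = L^2(\Omega,\cK)$, whence $\Omega = \bigcup_n A_n$ mod null, and $(\Omega,\mu)$ is purely atomic.

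I expect the main obstacle to be the atom-preservation claim for $\varphi_T$; it relies on the fact (established in the proof of Theorem~\ref{integral-representation}) that $\varphi_T$ and its inverse are both measurability preserving and that $\nu_T$ is equivalent to $\mu$ on $Y_T$. Everything else is a routine combination of the structure theorem with a standard separability/irreducibility manipulation.
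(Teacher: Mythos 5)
Your proof is correct, but it runs in the opposite direction from the paper's. The paper argues by contradiction: assuming $(\Omega,\mu)$ is not purely atomic, it lets $\Omega_2$ be the (positive-measure, atomless) complement of the union of atoms, uses irreducibility to produce $T\in\cS$ whose initial space is $L^2(X,\cK)$ for the atom $X$ and whose final space is $L^2(Y,\cK)$ with $Y\subseteq\Omega_2$, splits $Y=Y_1\cup Y_2$ into two disjoint pieces of positive measure, and observes that the initial spaces of $P_{Y_1}T$ and $P_{Y_2}T$ then correspond to disjoint positive-measure subsets of $X$ --- contradicting atomicity. You instead prove the positive statement that members of $\cS$ carry the atom $A$ either to zero or to another atom $\varphi_T(A)$, and then show the closed span of the orbit $\{T\cM_A : T\in\cS\}$ is a nonzero invariant subspace supported on countably many atoms, so irreducibility forces $\Omega$ to be a countable union of atoms. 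The two arguments are contrapositives of one another at the key step (the paper splits a non-atomic image and pulls the splitting back; you push the indivisibility forward), and both rest on the measure-class-preserving bijection $\varphi_T$ and on having $P_Z\in\cS$ via Theorem~\ref{env-band}. The paper's version is a bit shorter; yours buys the extra information that $\Omega$ is covered (mod null) by the $\cS$-orbit of a single atom, which is essentially the transitivity fact used implicitly in Proposition~\ref{measure-correction}.
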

\begin{proof}
By Theorem~\ref{env-band}, we may assume that $\cS$ contains its enveloping band of projections, so that the projection $P_Z$ onto $L^2(Z,\cK)$ belongs to $\cS$ for any measurable subset $Z$ of~$\Omega$.

Let $X\subseteq\Omega$ be an atom in $\Omega$. Suppose $(\Omega,\mu)$ is not purely atomic. Let $\Omega_1$ be the union of all atoms in $(\Omega,\mu)$ and denote the complement of $\Omega_1$ by $\Omega_2$. Then the measure of $\Omega_2$ is not zero and $\Omega_2$ has no atoms. Since $\cS$ is irreducible, there exists an operator $T\in\cS$ such that the initial space of $T$ is equal to $L^2(X,\cK)$, and the final space of $T$ overlaps with $L^2(\Omega_2,\cK)$ (otherwise, the set $\cS L^2(X,\cK)$ is orthogonal to the space $L^2(\Omega_2,\cK)$). Replacing $T$ with $P_{\Omega_2}T$, we may assume that the final space of $T$ is contained in $L^2(\Omega_2,\cK)$. 

Let $Y\subseteq\Omega_2$ be such that $L^2(Y,\cK)$ is the final space of~$\cT$. There exist two disjoint measurable subsets $Y_1$ and $Y_2$ of $Y$ of non-zero measure such that $Y=Y_1\cup Y_2$. The operators $T_1=P_{Y_1}T$ and $T_2=P_{Y_2}T$ both belong to~$\cS$. It follows that the initial spaces of $T_1$ and $T_2$ are disjoint subsets of $X$ of non-zero measure, contradicting the fact that $X$ is an atom.
\end{proof}

\begin{remark}\label{reducible-atomic}
Suppose that the semigroup $\cS$ is reducible. Represent $\hilb$ as a direct integral $\int_{\Omega}^\oplus\hilb_t\,dt$ compatible with the projections in $\cS$, as in Remark~\ref{reducible}. 
Let $X\subseteq\Omega$ be the union of all the atoms in~$\Omega$ and define
$$
\hilb_1=\int_X^\oplus\hilb_t\,dt,
$$
a direct integral over a purely atomic measure, and
$$
\hilb_2=\int_{\Omega\setminus X}^\oplus\hilb_t\,dt,
$$
a direct integral over an atomless measure space. Then, by an argument similar to that of the proof of Proposition~\ref{atoms}, $\hilb_1$ and $\hilb_2$ are $\cS$-reducing.
\end{remark}

The following proposition shows that it is not possible to construct a semigroup like the one described in Example~\ref{ugly-measure} if the underlying measure is purely atomic. Thus we obtain a cleaner version of our main result in this case.

\begin{proposition}\label{measure-correction}
Let $\cS=\cS^*$ be an irreducible semigroup of partial isometries. Let $(\Omega,\mu)$ be a measure space as in Theorem~\ref{integral-representation}. If $\mu$ (has an atom and thus) is purely atomic, then there exists a $\mu$-equivalent measure $\nu$ on $\Omega$ such that each $T\in\cS$ is represented as $T=V_{T}\cdot\int_\Omega^\oplus T_t\,dt$, where $V_{T}$ is given by
$$
(V_{T}f)(t)=f(\varphi_T^{-1}(t)).
$$
\end{proposition}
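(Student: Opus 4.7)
My plan is to rescale $\mu$ atom-by-atom so that $\nu$ is invariant under every $\varphi_T$; once this is achieved, the Radon--Nikodym derivative $h_T$ in the proof of Theorem~\ref{integral-representation} equals~$1$ identically, the weight $w_T$ disappears, and the formula $(V_T f)(t)=f(\varphi_T^{-1}(t))$ falls out directly. As a preliminary reduction I would invoke Theorem~\ref{env-band} to assume that $\cS$ contains its enveloping band of projections, so that $P_A\in\cS$ for every measurable $A\subseteq\Omega$; I would then identify $\Omega$ with its countable set of atoms, each of positive $\mu$-mass.

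The core construction is an \emph{orbit equivalence} on atoms: declare $a\sim b$ iff there exist $T_1,\dots,T_n\in\cS$ and atoms $a=a_0,a_1,\dots,a_n=b$ with $a_{i-1}\in X_{T_i}$ and $\varphi_{T_i}(a_{i-1})=a_i$. Reflexivity is witnessed by $T=P_{\{a\}}\in\cS$, for which $X_T=Y_T=\{a\}$ and $\varphi_T(a)=a$; symmetry follows from $\cS=\cS^*$ together with the identity $\varphi_{T^*}=\varphi_T^{-1}$, which is visible in the set-function definition of $\varphi_T$; transitivity is just concatenation of chains.

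The critical step is to show that $\sim$ has a single equivalence class. For any class $C$, I would check that $L^2(C,\cK)$ is $\cS$-invariant: if $T\in\cS$ and $a\in X_T\cap C$, then $\varphi_T(a)\sim a\in C$, so $\varphi_T(a)\in C$ and $T$ maps $L^2(X_T\cap C,\cK)$ into $L^2(C,\cK)$, while it annihilates $L^2(\Omega\setminus X_T,\cK)$. Irreducibility of $\cS$ then forces $C=\Omega$.

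With a single orbit in hand I simply set $\nu(\{a\})=1$ for every atom; this measure is clearly $\mu$-equivalent. Since $\varphi_T$ is a bijection of atoms, $\nu(\varphi_T^{-1}(B))=\nu(B)$ for every $T\in\cS$ and every measurable $B\subseteq Y_T$, so repeating the final part of the proof of Theorem~\ref{integral-representation} with $\nu$ in place of $\mu$ yields the claimed decomposition. I expect the orbit-transitivity claim to be the main obstacle, as it is precisely where irreducibility must be used; in its absence (cf.\ Remark~\ref{reducible-atomic}) one can only rescale $\nu$ to be constant on each orbit separately.
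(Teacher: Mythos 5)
Your proof is correct and rests on the same core idea as the paper's: enumerate the (countably many) atoms and assign them all equal mass, which makes the new measure both $\mu$-equivalent and $\varphi_T$-invariant, killing the Radon--Nikodym weight. However, the orbit-equivalence argument you single out as ``the main obstacle'' is in fact superfluous: a measure that is constant across \emph{all} atoms is automatically invariant under \emph{every} bijection of atoms, whatever the orbit structure, so transitivity of the $\cS$-action on atoms is never needed (and the paper's proof, which simply sets $\nu(a_k)=\mu(a_1)$ for all $k$ and observes equivalence, omits it entirely). Your orbit argument is itself sound --- and it is the right tool in the reducible setting of Remark~\ref{reducible-atomic}, where one genuinely must treat orbits separately --- but for this proposition it only lengthens the proof.
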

\begin{proof}
Theorem~\ref{integral-representation} provides a probability measure; it follows that $\Omega$ is the union of countably many atoms. Let $(a_n)$ be some enumeration of the atoms. It is easy to see that, by Theorem~\ref{integral-representation}, every operator has the form $T=V_{T}\cdot\oplus_n T_n$, where $V_{T}$ is a rearrangement of the atoms and the direct summation is over those atoms which are in the initial space of~$T$.

Define the measure $\nu$ by letting
$$
\nu(a_k)=\mu(a_1)
$$
for all~$k$. It is clear that this measure is equivalent to~$\mu$. The formula for $V_{T}$ is also easy to obtain.
\end{proof}

We will now describe some special semigroups whose representations have purely atomic measures. The following notation turns out to be useful with such semigroups.

\begin{notation} 
Let $k\in\bbN\cup\{\infty\}$. We will denote by the symbol $\cS^k_0$ the semigroup of all $k\times k$-matrices having at most one non-zero entry, and that entry equal to one. We will denote by $\cS^k_1$ the semigroup of all $k\times k$ matrices with the property that every row and every column has at most one non-zero entry, and that entry equal to one. 

Let $\cU$ be a semigroup of operators on a (finite- or infinite-dimensional) Hilbert space~$\hilb$. We use the symbols $\cS_0^k(\cU)$ and $\cS_1^k(\cU)$ to denote the semigroups of operators on $\oplus_{i=1}^k\hilb$ whose matrices are obtained from matrices in $\cS_0^k$ (respectively, $\cS_1^k$) by replacing the non-zero entries with elements of $\cU$ and the zero entries with the zero operators on~$\hilb$. If $\cU$ is a group of unitaries then every subsemigroup of a $\cS_1^k(\cU)$ will be referred to as a \term{zero-unitary semigroup}.
\end{notation}

For example, if $\cU$ is the group of unitaries on a Hilbert space~$\hilb$, then a typical member of $\cS_1^4(\cU)$ would look like
$$
\begin{bmatrix}
0 & U_1 & 0 & 0\\
0 & 0 & 0 & 0 \\
U_2 & 0 & 0 & 0 \\
0 & 0 & U_3 & 0
\end{bmatrix},
$$
where $U_1$, $U_2$, and $U_3$ are some unitary operators on~$\hilb$.

\begin{theorem}Let $\cS$ be a self-adjoint finitely generated semigroup of partial isometries. That is, $\cS=\langle T_1,T_1^*,\dots,T_n,T_n^*\rangle$ for some $T_1,\dots,T_n$. If $\ran T_i=\ran T_j$, $\ker T_i=\ker T_j$ and $T_iT_j=T_jT_i$ for all $i$ and~$j$, then $\cS$ is atomic in the sense that the measure space from Theorem~3.1 corresponding to $\cS$ is purely atomic.
\end{theorem}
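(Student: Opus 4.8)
The plan is to reduce everything to the single generator $T_1$, whose structure is governed by Halmos--Wallen, and then to argue that adjoining the remaining generators only permutes the resulting atoms.

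First the reductions and setup. If $\cS$ is reducible I would pass to the decomposition of Remark~\ref{reducible} and Remark~\ref{reducible-atomic} and treat each summand separately, so I may assume $\cS$ is irreducible and apply Theorem~\ref{integral-representation}; by Proposition~\ref{atoms} it then suffices to produce a single atom of $(\Omega,\mu)$. Since $\ker T_i$ and $\ran T_i$ are independent of $i$, all the initial projections $E:=T_i^*T_i$ coincide and all the final projections $Q:=T_iT_i^*$ coincide; write $M=\im E$ and $N=\im Q$. By Proposition~\ref{proj-band} the projections $E$ and $Q$ commute, and by irreducibility $M\vee N=\hilb$. I factor each generator as $T_j=T_1g_j$ with $g_j:=T_1^*T_j$; a one-line check gives $g_j^*g_j=g_jg_j^*=E$, so each $g_j$ is a unitary on $M$ (extended by $0$ off $M$). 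Finally, $T_1$ is a power partial isometry by Proposition~\ref{semigroup-ppi}.

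The baseline is the case $n=1$. By Theorem~\ref{pwr-prt-isometry}, $T_1$ splits into a unitary part, pure isometric and co-isometric parts, and truncated shifts; on each of these the band $\cP\bigl(\cS(T_1,T_1^*)\bigr)$ generates an atomic Boolean algebra (a unitary summand is a single atom, a shift or co-shift contributes atoms indexed by $\bbN$, and a truncated shift contributes finitely many). Thus $\cS(T_1,T_1^*)$ alone already has purely atomic measure, which settles $n=1$; for general $n$ it remains to check that the extra generators do not refine this band. Here are the load-bearing identities. Writing $D:=ET_1$, and using $g_iT_1=g_iET_1=g_iD$ together with the injectivity of $T_1$ on $M$, the relation $T_iT_j=T_jT_i$ becomes $g_iDg_j=g_jDg_i$; taking $g_1=E$ shows that each $g_j$ commutes with $D$. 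One verifies $D\in\cS$ is itself a partial isometry with $D^*D=(T_1^*)^2T_1^2$, and an easy induction gives $T_1^kv=T_1D^{k-1}v$ for $v\in M$, whence $\ker T_1^k\cap M=\ker D^{k-1}$. Therefore the initial projections $E_k:=(T_1^*)^kT_1^k$ are spectral projections of $D$, and each $g_j$ commutes with every $E_k$. Applying the identical analysis to the family $\{T_j^*\}$ — which also satisfies the hypotheses, with the roles of $M$ and $N$ interchanged — produces unitaries $\tilde g_j:=T_1T_j^*$ on $N$ commuting with all the final projections $Q_k:=T_1^k(T_1^*)^k$. So the generators normalize the atomic band generated by the $E_k$ and $Q_k$.

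The step I expect to be the main obstacle is exactly this last one: showing that no genuinely new projection $W^*W$ ($W\in\cS$) escapes the atomic algebra generated by the $E_k$ and $Q_k$. The delicate point is a support mismatch — the unitaries $g_j$ live on $M$ and commute with the $E_k$, while the range projections $Q_k$ live on $N$, so no single family of unitaries is seen to commute with both families of projections at once. I would resolve this by exploiting the two dual families $\{g_j\}$ and $\{\tilde g_j\}$ simultaneously (they are interchanged by $T_1$-conjugation, since $\tilde g_j=T_1g_j^*T_1^*$), rewriting an arbitrary word as a word in $T_1,T_1^*$ followed by a band-fixing unitary. Rather than identifying the entire band, I would then invoke Proposition~\ref{atoms}: it is enough to exhibit one atom, and within any single Halmos--Wallen summand of $T_1$ the minimal projection furnished by that summand is carried to a projection of the same type by the permutation action of the $g_j$ and $\tilde g_j$, hence remains an atom of $\cE(\cS)$ and forces $(\Omega,\mu)$ to be purely atomic.
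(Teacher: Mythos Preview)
Your route is workable but considerably more elaborate than the paper's, and the step you yourself label ``the main obstacle'' is left as a sketch rather than a proof. The paper bypasses the factorization $T_j=T_1g_j$ entirely: it proves directly, by a five-line induction using only $T_iT_j=T_jT_i$ and the equal-range/equal-kernel hypotheses, that $\ran T_i^{k}=\ran T_j^{k}$ and $\ker T_i^{k}=\ker T_j^{k}$ for every $k$. Thus the lattice $\cL_i$ generated by the ranges and kernels of the powers of $T_i$ is independent of~$i$; since the Halmos--Wallen atoms of $\cS(T_i,T_i^*)$ are determined by $\cL_i$, all generators share the same atoms and each $T_j$ maps an atom to an atom or to zero. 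There is no residual obstacle.

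Your machinery in fact contains this same conclusion, but you do not extract it. From $[g_j,D]=0$ one gets $T_j^{k}=T_1^{k}g_j^{k}$ by the one-line induction
\[
T_j^{k+1}=T_1g_j\,T_j^{k}=T_1g_jT_1^{k}g_j^{k}=T_1(g_jD)T_1^{k-1}g_j^{k}=T_1D\,g_jT_1^{k-1}g_j^{k}=T_1^{2}g_jT_1^{k-1}g_j^{k}=\cdots=T_1^{k+1}g_j^{k+1},
\]
and since $g_j$ is a unitary on $M$ commuting with $E_k$, this gives $\ker T_j^{k}=\ker T_1^{k}$; the dual argument with $\tilde g_j$ gives $\ran T_j^{k}=\ran T_1^{k}$. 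That is exactly the paper's key lemma, and from there the finish is immediate---no need to juggle the two dual families or to rewrite arbitrary words.

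As written, the genuine gap is the last paragraph: you assert that a minimal projection in the $T_1$-band ``is carried to a projection of the same type by the permutation action of the $g_j$ and $\tilde g_j$, hence remains an atom of $\cE(\cS)$'', but this is a hope, not an argument. You have only shown that each $g_j$ commutes with the $E_k$ and each $\tilde g_j$ with the $Q_k$; you have not shown that an arbitrary projection $W^*W$ with $W\in\cS$ lies in the Boolean algebra generated by the $E_k$ and $Q_k$, which is what ``remains an atom of $\cE(\cS)$'' requires. The clean way to close this---and what the paper effectively does---is to observe that $\cP(\cS)=\cP(\cS(T_1,T_1^*))$ because the lattices $\cL_i$ coincide.
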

\begin{proof}
For each $i$, let $\cL_i$ be the lattice of subspaces of $\cH$ generated by the ranges and the kernels of~$T_i^k$, $k=0,1,2,\dots$. Let us show that $\cL_i=\cL_j$ for all $i$ and~$j$. It is enough to establish that $\ran T_i^k=\ran T_j^k$ and $\ker T_i^k=\ker T_j^k$, $k=0,1,2,\dots$. If $k=0$, the statement is trivial. Assume by induction that the statement is proved for some~$k$. Pick $x\in\ran T_i^{k+1}$. There is $y\in\cH$ such that $x=T_i^{k+1}y=T_iT_i^ky$. Since $\ran T_i^k=\ran T_j^k$, there is $z\in\cH$ such that $T_i^ky=T_j^kz$. Then $x=T_iT_j^kz$. By commutativity, $x=T_j^kT_iz$. Since $\ran T_i=\ran T_j$ by the assumptions of the theorem, there is $v\in\cH$ such that $x=T_j^kT_jv=T_j^{k+1}v$. So, $\ran T_i^{k+1}=\ran T_j^{k+1}$.
Similarly, if $x\in\ker T_i^{k+1}$, that is, $T_i^{k+1}x=0$, then $T_i(T_i^kx)=0$. Assuming by induction that $\ker T_i^k=\ker T_j^k$, we get $T_i(T_j^kx=0)$. By commutativity, $T_j^k(T_ix)=0$. Again, using the assumptions of the theorem, $T_j^k(T_jx)=T_j^{k+1}x=0$. So, $\ker T_i^{k+1}=\ker T_j^{k+1}$. This shows that $\cL_i=\cL_j$ for all $i$ and~$j$.

For each $i=1,\dots, n$, let $\cS_i$ denote the singly generated semigroup $\langle T_i,T_i^*\rangle$. By Theorem~2.1, each $\cS_i$ is atomic (in the sense of Theorem~3.1). It readily follows from $\cL_i=\cL_j$ that the atoms corresponding to $\cS_i$ are the same as those for $\cS_j$ ($i,j=1,\dots,n$). So, each generator (and, hence, each member) of $\cS$, when applied to any of these atoms, turns it either into zero or into another atom.
\end{proof}

\begin{remark}
There are examples showing that without the range assumption or the kernel assumption the statement is not true even with commuting generators.
\end{remark}

The next theorem shows that if the semigroup contains a non-zero compact operator then the integral representation for the semigroup is discrete. Moreover, in this case, we do not even need the semigroup to be self-adjoint.

We will need the following standard lemma, whose proof we include for the convenience of the reader.

\begin{lemma}\label{unitary-group}
Suppose that $\cS$ is a norm closed semigroup of matrices and $U\in\cS$ is a unitary. Then $I\in\cS$ and $U^*\in\cS$.
\end{lemma}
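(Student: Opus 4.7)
The plan is to exploit compactness of the unitary group in finite dimensions together with the fact that $\|U^n\|=1$ for all $n$. In a finite-dimensional matrix algebra the set of unitaries is compact in norm, so the sequence $(U^n)_{n\in\bbN}$ admits a norm-convergent subsequence $U^{n_k}\to V$ for some unitary $V$. The key is then to compare consecutive terms of this subsequence in a way that produces the identity as a limit of \emph{positive} powers of $U$, which is what is needed because $\cS$ is only closed under multiplication, not inversion.

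Setting $m_k=n_{k+1}-n_k$ and passing to a further subsequence, we may assume $m_k\to\infty$, so in particular $m_k\ge 2$ eventually. Writing
$$
U^{m_k}=U^{n_{k+1}}\,(U^{n_k})^{-1},
$$
the right-hand side converges in norm to $V V^{-1}=I$ because $U^{n_k}\to V$ implies $(U^{n_k})^{-1}=(U^{n_k})^*\to V^*=V^{-1}$. Since $m_k\ge 1$ we have $U^{m_k}\in\cS$, and norm-closedness of $\cS$ then gives $I\in\cS$.

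For the second assertion, apply the same subsequence: since $m_k\ge 2$, the power $U^{m_k-1}$ belongs to $\cS$, and
$$
U^{m_k-1}=U^{m_k}\,U^{-1}\longrightarrow I\cdot U^{-1}=U^{*}.
$$
Again norm-closedness yields $U^{*}\in\cS$.

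The only step requiring any care is the passage to a subsequence along which the \emph{differences} $n_{k+1}-n_k$ tend to infinity; this is standard (any subsequence of a bounded sequence in a compact space has a further subsequence with differences going to infinity, since otherwise the $n_k$ would lie in finitely many arithmetic progressions). Everything else is formal manipulation using that unitaries are invertible with $U^{-1}=U^{*}$ and that inversion is norm-continuous on the unitary group.
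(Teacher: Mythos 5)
Your proof is correct, and it reaches the key intermediate fact --- that $I$ is a norm limit of \emph{positive} powers of $U$ --- by a different mechanism than the paper. The paper diagonalizes $U$ (its eigenvalues lie on $\partial\bbD$) and then asserts the existence of a sequence $(n_i)$ with $U^{n_i}\to I$, which amounts to a recurrence/simultaneous-approximation argument on the torus of eigenvalues. You instead use norm-compactness of the unitary group to extract a convergent subsequence $U^{n_k}\to V$ and take consecutive quotients, $U^{n_{k+1}-n_k}=U^{n_{k+1}}(U^{n_k})^{-1}\to VV^{-1}=I$; this avoids diagonalization entirely and works verbatim for any element whose power semigroup is relatively compact, so it is marginally more robust, at the cost of an extra subsequence extraction. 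The final step, obtaining $U^*$ as the limit of $U^{m_k-1}$, is identical in spirit to the paper's $U^*=\lim U^{n_i-1}$. One small remark: your justification for arranging $m_k=n_{k+1}-n_k\to\infty$ (the aside about arithmetic progressions) is muddled, but the fact you need is immediate --- from any strictly increasing sequence of integers one can extract a subsequence whose consecutive gaps exceed $2$ (indeed tend to infinity), and a subsequence of a convergent sequence still converges --- so this is a cosmetic blemish rather than a gap.
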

\begin{proof}
Note that the unitary matrix $U$ is similar to a diagonal matrix, with every diagonal entry in~$\partial\bbD$. It follows that there is a sequence $(n_i)$ of natural numbers such that $U^{n_i}\to I$. Since $\cS$ is norm closed, $I\in\cS$. Similarly, $U^*=U^{-1}=\lim_{i\to\infty}U^{n_i-1}\in\cS$.
\end{proof}

\begin{theorem}\label{part-isom-semigroups}
Suppose that $\cS$ is an irreducible norm-closed semigroup of partial isometries containing a non-zero compact operator. Then there exists $k\in\bbN\cup\{\infty\}$ and an irreducible group $\cU$ of unitary matrices such that, after a unitary equivalence,
$$
\cS^k_0(\cU)\subseteq \cS\subseteq \cS^k_1(\cU).
$$
The size of matrices in $\cU$ is equal to the minimal non-zero rank of operators in~$\cS$.
\end{theorem}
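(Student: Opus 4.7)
The plan is to focus on the minimal positive rank $m$ of operators in $\cS$, use the Halmos--Wallen commutation criterion to tile $\cH$ with $m$-dimensional subspaces, and then invoke Lemma~\ref{unitary-group} together with norm-closure to produce both the projections onto these blocks and the unitary group $\cU$. Since a compact partial isometry has closed range and is therefore finite-rank, $m := \min\{\rk T : 0 \ne T \in \cS\}$ is a finite positive integer, and $\cI := \{T \in \cS : \rk T \le m\}$ is a two-sided ideal in $\cS$. For any $T_1, T_2 \in \cI \setminus \{0\}$ the product $T_2 T_1 \in \cS$ is a partial isometry, so the Halmos--Wallen criterion cited in Proposition~\ref{proj-band} forces the initial projection of $T_2$ to commute with the final projection of $T_1$; a direct computation then gives $\rk(T_2 T_1) = \dim(V_{T_2} \cap W_{T_1})$, where $V_T$ and $W_T$ denote the initial and final spaces of $T$. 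By minimality this rank is $0$ or $m$, so the $m$-dimensional subspaces $V_{T_2}$ and $W_{T_1}$ are either equal or orthogonal.

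Next I would promote this dichotomy to a genuine orthogonal decomposition using irreducibility: the family $\Sigma := \{V_T, W_T : T \in \cI \setminus \{0\}\}$ consists of pairwise orthogonal $m$-dimensional subspaces whose closed span is $\cS$-invariant, so $\cH = \bigoplus_{i \in \Lambda} \cH_i$ with $|\Lambda| = k$; the same commutation identity shows that each $T \in \cS$ respects this partition, in the sense that $V_T$ is a sum of some $\cH_i$'s and $T$ carries each such $\cH_j$ isometrically onto a single $\cH_i$. By applying irreducibility to the $\cS$-invariant subspace $\overline{\cS \cH_l}$ for an appropriately chosen $\cH_l$, I can then produce, for each $\cH_i$, a \emph{nice} element $T \in \cI$ with $V_T = W_T = \cH_i$ (obtained as a product $T' T_0$, where $T_0 \in \cI$ sends $\cH_i$ to $\cH_l$ and $T' \in \cS$ sends $\cH_l$ back to $\cH_i$). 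Since $T|_{\cH_i}$ is then a unitary on the finite-dimensional space $\cH_i$, Lemma~\ref{unitary-group} delivers $I_{\cH_i}$ as a norm-limit of powers of $T|_{\cH_i}$, and norm-closure of $\cS$ gives $P_{\cH_i} \in \cS$ for every $i \in \Lambda$.

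With $\{P_{\cH_i}\} \subseteq \cS$ available, each block $P_{\cH_i} T P_{\cH_j}$ of an arbitrary $T \in \cS$ is either $0$ or a unitary carrying $\cH_j$ onto $\cH_i$ (by minimality of $m$). Since $T$ is a partial isometry, two non-zero blocks in row $i$ would force the two $m$-dimensional ranges inside $\cH_i$ to be mutually orthogonal --- impossible --- and symmetrically for columns via the identity $\|Tx\|^2 = \sum_{i'} \|P_{\cH_{i'}} T x\|^2$. So $T$ has at most one non-zero block per row and per column. Fixing a base $\cK = \cH_{i_0}$, I define
$$
\cU := \{T|_{\cK} : T \in \cS,\ V_T = W_T = \cK\},
$$
a norm-closed semigroup of unitaries on $\cK$ which is a group by Lemma~\ref{unitary-group}. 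Irreducibility and the projections make the graph $G$ on $\Lambda$ (edge $j \to i$ whenever some $T \in \cI$ has $V_T = \cH_j,\ W_T = \cH_i$) strongly connected, so I can pick transport elements $R_i \in \cS$ with $V_{R_i} = \cK,\ W_{R_i} = \cH_i$ and backward partners $R_i' \in \cS$ with $V_{R_i'} = \cH_i,\ W_{R_i'} = \cK$. A coset argument using the group structure of $\cU$ identifies every non-zero block of every $T \in \cS$ with an element of $\cU$ under the identifications $\cH_i \cong \cK$ supplied by the $R_i$, giving $\cS \subseteq \cS_1^k(\cU)$; composing $\cU$-elements with the $R_i$'s and $R_i'$'s realises the single-block matrices and yields $\cS_0^k(\cU) \subseteq \cS$. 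Any $\cU$-invariant subspace $\cK_0 \subsetneq \cK$ would transport to a proper closed $\cS$-invariant subspace of $\cH$, so $\cU$ is irreducible.

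The step I expect to be most delicate is the extraction of nice elements in the second paragraph: one must turn the abstract irreducibility of $\cS$ into a concrete element of $\cS$ that carries a fixed $\cH_l \in \Sigma$ back to a prescribed $\cH_i \in \Sigma$, which in turn requires knowing that arbitrary elements of $\cS$ already respect the partition --- a short commutation argument but one that has to be handled carefully without yet having the projections $P_{\cH_i}$ in hand. Once these combinatorial facts are secured, the remainder (Lemma~\ref{unitary-group}, block reading, and the identification $\cS_0^k(\cU) \subseteq \cS \subseteq \cS_1^k(\cU)$) is essentially mechanical.
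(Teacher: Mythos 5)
Your overall architecture (minimal rank $m$, the ideal of minimal-rank elements, block decomposition into $m$-dimensional pieces, Lemma~\ref{unitary-group} to produce projections and the group $\cU$) parallels the paper, but your opening move has a genuine gap. The Halmos--Wallen criterion applied to a product $T_2T_1$ tells you that $T_2^*T_2$ commutes with $T_1T_1^*$, i.e.\ it compares the \emph{initial} space of one factor with the \emph{final} space of the other. It never compares two initial spaces with each other, nor two final spaces. So your assertion that $\Sigma=\{V_T,W_T : T\in\cI\setminus\{0\}\}$ ``consists of pairwise orthogonal $m$-dimensional subspaces'' is not established: nothing you have written rules out, say, two final spaces $W_{T_1}$ and $W_{T_2}$ overlapping in a subspace of dimension strictly between $0$ and $m$. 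Everything downstream --- the decomposition $\cH=\bigoplus_i\cH_i$, the claim that elements of $\cS$ ``respect the partition,'' the transport elements, and the extraction of the projections $P_{\cH_i}$ --- rests on this unproved orthogonality, and the fix is circular as you set it up: to compare two same-type spaces you would want a projection of rank $m$ inside $\cS$, but you only obtain such projections \emph{after} the partition is in place. (You flag the second paragraph as the delicate point, but the real soft spot is already in the first.)

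The missing ingredient, which is how the paper breaks the circle, is the existence of a single \emph{non-nilpotent} element of minimal rank. The paper gets this from Turovskii's theorem applied to the irreducible ideal $\cJ$ of minimal-rank elements; combined with the Halmos--Wallen structure theorem (Theorem~\ref{pwr-prt-isometry}) such an element must be of the form $U\oplus 0$ with $U$ unitary on an $r_0$-dimensional subspace, and Lemma~\ref{unitary-group} plus norm-closedness then puts the rank-$r_0$ projection $P$ into $\cS$. With $P$ in hand one compresses ($\cU=P\cS P|_{P\hilb}\setminus\{0\}$ is an irreducible unitary group) and builds the block structure one block at a time using irreducibility; the orthogonality of the blocks comes for free from that construction rather than having to be proved up front. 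If you want to salvage your route, you must either invoke Turovskii (or some substitute) to manufacture a unitary-type minimal-rank element before attempting the tiling, or supply an independent argument that two initial (resp.\ two final) spaces of minimal-rank elements cannot partially overlap; as written, neither is present.
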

\begin{proof}
We start by introducing a notation. The symbol $E_{ij}^k$ denotes the $k\times k$ matrix (where $k\in\bbN\cup\{\infty\}$) whose $i,j$-entry is equal to $1$ and whose all other entries are equal to~$0$. If $A$ is an $r\times r$ matrix, then the symbol $E_{ij}^k\otimes A$ denotes the matrix obtained from $E_{ij}^k$ by replacing its $i,j$-entry with $A$ and replacing all other entries with the $r\times r$ zero matrix.

Since a compact partial isometry is necessarily finite-rank, $\cS$ contains finite-rank operators. Denote the minimal non-zero rank of operators in $\cS$ by~$r_0$. Let $\cJ$ be the ideal of operators in $\cS$ of rank at most~$r_0$. Then $\cJ$ is an irreducible semigroup. By Turovskii's theorem (see~\cite{Turovskii}, see also \cite[Theorem~8.1.11]{RR00}), $\cJ$ contains a non-nilpotent operator~$T$. By Theorem~\ref{pwr-prt-isometry}, $T$ is a direct sum of a unitary, forward and backward shifts and their truncations, and a zero. Since $T$ is a finite-rank operator, it cannot contain infinite-dimensional shifts. Also, since $\rk T$ is minimal, it cannot have finite-dimensional truncations of the shift. Therefore, with respect to the decomposition $\hilb = (\ker T)^\perp\oplus\ker T$, we may represent $T$ as
$$
T=\begin{bmatrix}U & 0 \\ 0 & 0\end{bmatrix},
$$
where~$U$ is an $r_0\times r_0$ unitary matrix. By Lemma~\ref{unitary-group}, the operators 
$$
P=\begin{bmatrix}I_{r_0} & 0 \\ 0 & 0\end{bmatrix}
\quad\mbox{and}\quad
T^*=\begin{bmatrix}U^* & 0 \\ 0 & 0\end{bmatrix}
$$
belong to~$\cS$, as well.

Define $\cU=P\cS P|_{P\hilb}\setminus\{0\}$. Since $P\in\cS$, $\cU$ is a semigroup. Also, since $\cS$ is irreducible, so is~$\cU$. Next, if $U\in\cU$ then 
$$
\begin{bmatrix}U & 0 \\ 0 & 0\end{bmatrix}
$$
belongs to $\cS$, and hence it must be a partial isometry. It is clear that this implies that $U$ is a partial isometry. However, the rank of $U$ cannot be smaller than~$r_0$. Thus, $U$ is a unitary. By Lemma~\ref{unitary-group}, $U^*\in\cU$. It follows that $\cU$ is a group of $r_0\times r_0$ unitaries. 

Since $\cS$ is irreducible, there exists a non-zero operator~$B$ such that, for some operators $A$, $C$, and $D$, the operator
$$
S=\begin{bmatrix}A & C \\ B & D\end{bmatrix}\in\cS.
$$
Multiplying by $P$ on the right, we may assume that $C=0$ and $D=0$. Observe that if $A$ were non-zero then it would belong to $\cU$ (just multiply by $P$ on the left), hence it would be unitary. However, the operator $S$ is a partial isometry, which then would force $B$ to be equal to zero. Therefore, $A=0$. Also, since $k$ is the minimal non-zero rank in $\cS$, we must have $\rk B=r_0$. Thus, choosing an appropriate orthonormal basis, we may represent $S$ as
$$
S=\begin{bmatrix}0 & 0 & 0 \\ B & 0 & 0 \\ 0 & 0 & 0\end{bmatrix},
$$
where $B$ is an invertible $r_0\times r_0$ matrix. Again, since $S$ is a partial isometry, we conclude, by considering $S^*S$, that $B^*B=I_{r_0}$, so that $B$ is, in fact, unitary. Choosing an appropriate orthonormal basis, we may assume that $B=I_{r_0}$.

Inductively repeating this argument, we conclude that there is a decomposition of $\hilb$ into a direct sum of $k$ subspaces of dimension~$r_0$, where $k\in\bbN\cup\{\infty\}$, such that the operator $S_{i1}=E_{i1}^k\otimes I_{r_0}$ belongs to $\cS$ for all~$i$.

Let $i\le k$ and let $R$ be an operator in~$\cS$ which has the $(1,i)$-block non-zero (such an $R$ exists by irreducibility of~$\cS$). Multiplying $R$ by $S_{i1}S_{i1}^*$ on the right, we may assume that this is the only non-zero block in~$R$. Denote this block by~$C$. Repeating the argument that we used for the block~$B$ in the operator~$S$ above, we conclude that the $C$ is an $r_0\times r_0$ unitary. Moreover, multiplying $R$ by $S_{i1}$ on the right, we observe that $C\in\cU$. But then, multiplying $S$ by $E_{11}^k\otimes C^{-1}$ on the left, we obtain that $S_{1i}=E_{1i}^k\otimes I_{r_0}\in\cS$.

It follows that each $E_{ij}^k\otimes U=S_{i1}(E_{11}^k\otimes U)S_{1j}\in\cS$ for all $U\in\cU$ and for all $i$ and~$j$. That is, $\cS^k_0(\cU)\subseteq\cS$, and the first part of the theorem is proved. Since each non-zero block of an operator in~$\cS$ must be unitary, and $\cS$ consists of contractions, there may be at most one non-zero block in each block column. Similarly, since the adjoint of each operator in~$\cS$ is also a contraction, each row block can have at most one non-zero block. Hence, $\cS\subseteq\cS_1^k(\cU)$, and the second inclusion in the statement of the theorem is proved, too.
\end{proof}

\begin{corollary} \label{automatic-sa}
Let $\cS$ be an irreducible semigroup of partial isometries containing a compact operator. Then the self-adjoint semigroup generated by $\cS$ consists of partial isometries.
\end{corollary}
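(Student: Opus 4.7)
The plan is to apply Theorem~\ref{part-isom-semigroups} to the norm closure $\overline{\cS}$ of the given semigroup. First I would verify that $\overline{\cS}$ satisfies the hypotheses of Theorem~\ref{part-isom-semigroups}: it is trivially a norm-closed semigroup containing the compact operator that is already present in $\cS$, and it is irreducible since a closed subspace is invariant under $\cS$ if and only if it is invariant under $\overline{\cS}$. The only hypothesis requiring a short check is that $\overline{\cS}$ still consists of partial isometries: if $T_n \in \cS$ converges in norm to $T$, then $T_n^* \to T^*$ and hence $T_n^*T_n \to T^*T$ and $T_nT_n^* \to TT^*$ in norm, and since self-adjoint projections form a norm-closed subset of $\bofh$, both limits are projections, so $T$ is a partial isometry.

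Having arranged the hypotheses, Theorem~\ref{part-isom-semigroups} produces some $k \in \bbN \cup \{\infty\}$ and an irreducible group $\cU$ of $r_0 \times r_0$ unitaries such that, up to unitary equivalence, $\cS \subseteq \overline{\cS} \subseteq \cS_1^k(\cU)$. The key observation is then that $\cS_1^k(\cU)$ is itself a self-adjoint semigroup of partial isometries. For closure under multiplication: in the product of two matrices, each having at most one non-zero entry per row and per column, only one term can survive in any given row--column position, and that term is a product of two elements of $\cU$, which remains in $\cU$ since $\cU$ is a group. For self-adjointness: taking the adjoint transposes the support pattern and replaces each non-zero entry by its inverse, again giving a member of $\cS_1^k(\cU)$ because $\cU = \cU^{-1}$. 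Membership in the class of partial isometries is then immediate from the block form.

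Because $\cS \subseteq \cS_1^k(\cU)$ and $\cS_1^k(\cU)$ is already closed under both multiplication and adjunction, the self-adjoint semigroup generated by $\cS$ must lie inside $\cS_1^k(\cU)$, every element of which is a partial isometry. I expect no real obstacle in this argument: the entire work is done by Theorem~\ref{part-isom-semigroups}, and the only genuinely new content is the two routine verifications that the norm closure preserves the hypotheses and that $\cS_1^k(\cU)$ is a self-adjoint semigroup of partial isometries in its own right.
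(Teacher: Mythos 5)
Your proposal is correct and follows the paper's own proof essentially verbatim: pass to the norm closure, invoke Theorem~\ref{part-isom-semigroups} to trap $\cS$ inside $\cS_1^k(\cU)$, and note that $\cS_1^k(\cU)$ is itself a self-adjoint semigroup of partial isometries. The only difference is that you spell out the two routine verifications (norm limits of partial isometries are partial isometries; $\cS_1^k(\cU)$ is closed under products and adjoints) that the paper leaves implicit.
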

\begin{proof}
Let $\overline{\cS}$ be the norm-closure of $\cS$. Then $\overline{\cS}$ consists of partial isometries. It follows from Theorem~\ref{part-isom-semigroups} that there exist $k\in\bbN$ and $\cU$ a group of unitary operators such that $\cS\subseteq\overline{\cS}\subseteq\cS_1(\cU)$. The self-adjoint semigroup generated by $\cS$ consists precisely of finite words whose entries are of form $T$ or $T^*$, where $T\in\cS$. It is easy to see that any such product belongs again to $\cS_1(\cU)$. However, $\cS_1(\cU)$ only contains partial isometries.
\end{proof}

The following corollary, which is an immediate consequence of Theorem~\ref{part-isom-semigroups}, may look surprising.

\begin{corollary}\label{prime-size}
Let $n$ be a prime number and suppose that $\cS$ is an irreducible norm-closed semigroup of partial isometries in $M_n(\bbC)$ having no rank-one members. Then $\cS$ is a group of unitaries. 
\end{corollary}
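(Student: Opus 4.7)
The plan is to apply Theorem~\ref{part-isom-semigroups} directly and then exploit the primality of $n$. Since $M_n(\bbC)$ is finite-dimensional, every operator on $\bbC^n$ is compact, and irreducibility of $\cS$ precludes $\cS=\{0\}$, so $\cS$ contains a non-zero compact partial isometry. Theorem~\ref{part-isom-semigroups} therefore yields some $k\in\bbN$ and an irreducible group $\cU$ of $r_0\times r_0$ unitary matrices, where $r_0$ is the minimal non-zero rank of operators in $\cS$, such that
$$
\cS^k_0(\cU)\subseteq\cS\subseteq\cS^k_1(\cU).
$$
Because the operators in $\cS^k_1(\cU)$ are block matrices arranged in a $k\times k$ pattern with $r_0\times r_0$ blocks acting on $\bbC^n$, we must have $n=k\cdot r_0$.

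Now the primality of $n$ forces $(k,r_0)\in\{(n,1),(1,n)\}$. The case $(k,r_0)=(n,1)$ can be ruled out by the hypothesis: then $\cU$ is a subgroup of the unit circle in $\bbC$, and the inclusion $\cS^n_0(\cU)\subseteq\cS$ puts the rank-one matrices $E^n_{ij}\otimes 1$ into $\cS$, contradicting the assumption that $\cS$ has no rank-one members. Consequently $(k,r_0)=(1,n)$, and then
$$
\cS\subseteq\cS^1_1(\cU)=\cU\cup\{0\}.
$$
Every non-zero element of $\cS$ has rank $r_0=n$ and is therefore unitary, so $\cS\setminus\{0\}=\cU$ is a group of $n\times n$ unitaries, as claimed.

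I don't anticipate any real obstacle here; as the authors indicate in the line preceding the corollary, the statement is essentially a one-step deduction from Theorem~\ref{part-isom-semigroups}. The only arithmetic content is the dimension identity $n=k r_0$ combined with the fact that a prime factors only trivially, and the only mild subtlety is the harmless possibility that $0\in\cS$, which does not affect the group-of-unitaries conclusion.
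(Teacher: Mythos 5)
Your argument is correct and is exactly the deduction the paper intends: it gives no separate proof of Corollary~\ref{prime-size}, calling it an immediate consequence of Theorem~\ref{part-isom-semigroups}, and your chain $n=kr_0$, primality, and exclusion of $r_0=1$ via the no-rank-one hypothesis (equivalently, $r_0$ is by definition the minimal non-zero rank, so $r_0>1$ directly) is that deduction spelled out. Your closing remark about the possible presence of $0$ is a fair observation about the paper's own loose phrasing rather than a gap in your proof.
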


\end{document}